\pgfplotsset{compat=1.14}
\NewDocumentCommand{\LineFor}{m m}{%
  \State\textbf{for} {#1}, \textbf{do} {#2}, \textbf{end}
}
\newcommand{\mch}[1]{\multicolumn{1}{c|}{{#1}}}
\newcommand{\mchm}[1]{\multicolumn{1}{c|}{{$#1$}}}
\DeclareMathOperator{\diag}{diag}
\DeclareMathOperator{\rank}{rank}
\NewDocumentCommand \qtext {m} {\quad\text{#1}\quad}
\NewDocumentCommand \Real {} {\mathbb{R}}
\NewDocumentCommand \Rdn {} {\mathbb{R}^{[d,n]}}
\NewDocumentCommand \Sdn {} {\mathbb{S}^{[d,n]}}
\NewDocumentCommand \Kdnr {} {\mathbb{K}^{[d,n,r]}}
\NewDocumentCommand \Kdnp {} {\mathbb{K}^{[d,n,p]}}
\NewDocumentCommand \Kdns {} {\mathbb{K}^{[d,n,s]}}
\NewDocumentCommand \Dnvec {} {\mathbf{D}_{\nvec}}
\NewDocumentCommand \Dlvec {} {\mathbf{D}_{\lvec}}
\NewDocumentCommand \T { O{} m } {\boldsymbol{#1\mathscr{\MakeUppercase{#2}}}}
\NewDocumentCommand \Mx { O{} m } {{\bm{#1\mathbf{\MakeUppercase{#2}}}}} 
\NewDocumentCommand \Vc { O{} m } {{\bm{#1\mathbf{\MakeLowercase{#2}}}}} 
\NewDocumentCommand \dotprod {m m}{\big< #1 , #2 \big>} 
\NewDocumentCommand \X { } {\T{X}}
\NewDocumentCommand \Xs { } {\T[\tilde]{X}}
\NewDocumentCommand \xe { s } 
{
  \IfBooleanTF{#1}
  {x_{i_1 i_2 \dots i_d}}
  {x_{i}}
}
\NewDocumentCommand \bigsum { G{1} }
{
  \sum_{i_{#1}=1}^n \cdots \sum_{i_d=1}^n
}
\NewDocumentCommand \psum { } {\sum_{\ell=1}^p}
\NewDocumentCommand \rsum { } {\sum_{j=1}^r}
\NewDocumentCommand \V {} {\Mx{V}}
\NewDocumentCommand \Vs {} {\Mx[\tilde]{V}}
\NewDocumentCommand \Vl { G{\ell} } {\Vc{v}_{#1}}
\NewDocumentCommand \Vbar { } {\Vc[\bar]{v}}
\NewDocumentCommand \VPI { O{\Vbar} G{i} }{%
  #1 \otimes \Vc{e}_{#2} \otimes \Vc{e}_{#2}    
  + \Vc{e}_{#2} \otimes #1 \otimes \Vc{e}_{#2}    
  + \Vc{e}_{#2} \otimes \Vc{e}_{#2} \otimes #1
}
\NewDocumentCommand \Vlod { G{\ell} O{d} } {\Vc{v}_{#1}^{\otimes #2}}
\NewDocumentCommand \Vikl {O{k} G{\ell} } {v_{i_{#1} #2}}
\NewDocumentCommand \Vil { G{\ell} } {v_{i #1}}
\NewDocumentCommand \nvec {} {\Vc{\nu}}
\NewDocumentCommand \nl { G{\ell} } {\nu_{#1}}
\NewDocumentCommand \xterm { } {\nl^{\phantom{\otimes d}} \!\!\!  \Vlod}
\NewDocumentCommand \M {} {\T[\hat]{X}}
\NewDocumentCommand \Mk { O{k} } {\Mx[\hat]{X}_{(#1)}}
\DeclareDocumentCommand \me { s } 
{
  \IfBooleanTF{#1}
  {\hat x_{i_1 i_2\dots i_d}}
  {\hat x_{i}}
}
\NewDocumentCommand \A {} {\Mx{A}}
\NewDocumentCommand \Aj { G{j} } {\Vc{a}_{#1}}
\NewDocumentCommand \Ajod { G{j} } {\Vc{a}_{#1}^{\otimes d}}
\NewDocumentCommand \Aji { G{j} } {\Vc{\mu}_{#1}} %
\NewDocumentCommand \Aikj {O{k} G{j} } {a_{i_{#1} #2}}
\NewDocumentCommand \Y {} {\Mx{Y}}
\NewDocumentCommand \Yj {} {\Vc{y}_j}
\NewDocumentCommand \lvec {} {\Vc{\lambda}}
\NewDocumentCommand \lj { G{j} } {\lambda_{#1}}
\NewDocumentCommand \mterm { } {\lj^{\phantom{\otimes d}} \!\!\! \Ajod}
\NewDocumentCommand \gl {} {\Vc{g}_{\Vc{\lambda}}}
\NewDocumentCommand \gA {} {\Mx{G}_{\A}}
\NewDocumentCommand \Ak { G{k} t' t"  } { \Mx{A}_{#1}\IfBooleanTF{#2}{^{\intercal}}{}\IfBooleanTF{#3}{^{\phantom{\intercal}}}{} }
\NewDocumentCommand \AkAkt { G{k} } {\Ak{#1}'\Ak{#1}"}
\NewDocumentCommand \Ake { G{k} G{i} G{j} } {
  a_{#1}(#2_{#1},#3)
}
\NewDocumentCommand \KT { s } {
  \llbracket 
  \IfBooleanTF{#1}{\lvec;}{}
  \Ak{1}, \Ak{2}, \dots,  \Ak{d} \rrbracket
}
\NewDocumentCommand \RelErrPlot {m m} {%
  \begin{tikzpicture}
    \pgfmathsetmacro\presult{#2*250}
    \begin{axis}[
      legend columns=-1,
      legend entries={$\sigma=10^{-4}$,$\sigma=10^{-3}$,$\sigma=10^{-2}$, $\sigma=10^{-1}$, true $r$},
      legend to name=relegend#1#2,
      title = {$r=#2,p=\pgfmathprintnumber{\presult}$},
      ylabel = rel. error,
      ymode = log,
      ytick distance = 10,
      ymax = 0.9,
      ymin = 9e-9,
      ]
      \addplot table[x=r#2_testvals,y=d#1_r#2_s10e-4_relerr] {\gmmtable};
      \addplot table[x=r#2_testvals,y=d#1_r#2_s10e-3_relerr] {\gmmtable};
      \addplot table[x=r#2_testvals,y=d#1_r#2_s10e-2_relerr] {\gmmtable};
      \addplot table[x=r#2_testvals,y=d#1_r#2_s10e-1_relerr] {\gmmtable};
      \addplot +[black,dashed,thick,no marks] coordinates {(#2,1e-9) (#2,0.9)};
    \end{axis}
  \end{tikzpicture}%
}
\NewDocumentCommand \ScorePlot {m m} {%
  \begin{tikzpicture}
    \pgfmathsetmacro\presult{#2*250}
    \begin{axis}[
      ylabel = sim.~score,
      ytick distance = 0.1,
      ymax = 1,
      ymin = 0.8,
      ]
      \addplot table[x=r#2_testvals,y=d#1_r#2_s10e-4_score] {\gmmtable};
      \addplot table[x=r#2_testvals,y=d#1_r#2_s10e-3_score] {\gmmtable};
      \addplot table[x=r#2_testvals,y=d#1_r#2_s10e-2_score] {\gmmtable};
      \addplot table[x=r#2_testvals,y=d#1_r#2_s10e-1_score] {\gmmtable};
      \addplot +[black,dashed,thick,no marks] coordinates {(#2,0) (#2,1)};
    \end{axis}
  \end{tikzpicture}%
}
\NewDocumentCommand \TimePlot {m m m} {%
  \begin{tikzpicture}
    \pgfmathsetmacro\presult{#2*250}
    \begin{axis}[
      xlabel = approx.~rank ($\hat r$),
      ylabel = time,
      xtick distance = 1,
      ymin = 0,
      ymax = #3,
      ]
      \addplot table[x=r#2_testvals,y=d#1_r#2_s10e-4_time] {\gmmtable};
      \addplot table[x=r#2_testvals,y=d#1_r#2_s10e-3_time] {\gmmtable};
      \addplot table[x=r#2_testvals,y=d#1_r#2_s10e-2_time] {\gmmtable};
      \addplot table[x=r#2_testvals,y=d#1_r#2_s10e-1_time] {\gmmtable};
      \addplot +[black,dashed,thick,no marks] coordinates {(#2,0) (#2,#3)};
    \end{axis}
  \end{tikzpicture}%
}
\NewDocumentCommand \RecoveryPlot {m} {%
  \begin{tikzpicture}
    \begin{axis}[xmin=-3, xmax=1.5, ymin=-2.5, ymax=0.5,
      title={$\hat r=#1$},
      legend columns=-1,
      legend entries={~data~~~,,,,,,~mean~~~,,,,,~recovered mean},
      legend to name=mlegend#1]
      \addplot[only marks, mark size=0.4pt] table[meta=meta] {coords.dat};
      \addplot[only marks, mark=x, mark size=5pt, line width=3pt] table[meta=meta] {truemeans.dat};
      \addplot[only marks] table[meta=meta] {means#1.dat};
    \end{axis}
  \end{tikzpicture}%
}
\title{Estimating Higher-Order Moments Using Symmetric Tensor
  Decomposition%
  \thanks{%
    This material is based upon work for which the first author was
    supported by the National Science Foundation Mathematical Sciences Graduate
    Internship (MSGI) Program and the second author was
    supported by the U.S. Department of Energy, Office of Science,
    Office of Advanced Scientific Computing Research (ASCR) Applied
    Mathematics Program.
    Sandia National Laboratories is a multimission laboratory
    managed and operated by National Technology and Engineering
    Solutions of Sandia, LLC., a wholly owned subsidiary of Honeywell
    International, Inc., for the U.S.\@ Department of Energy's National
    Nuclear Security Administration under contract DE-NA-0003525.
    This paper describes objective technical results and analysis. Any
    subjective views or opinions that might be expressed in the paper
    do not necessarily represent the views of the U.S.\@ Department of
    Energy or the United States Government.}}
\author{%
  Samantha Sherman%
  \thanks{University of Notre Dame, Notre Dame, IN (\email{ssherma1@nd.edu})}
  \and
  Tamara G. Kolda%
  \thanks{Sandia National Laboratories, Livermore, CA (\email{tgkolda@sandia.gov})}
  }
\begin{document}
\maketitle

\begin{abstract}
  We consider the problem of decomposing higher-order moment tensors,
  i.e., the sum of symmetric outer products of data vectors.
  Such a decomposition can be used to estimate the
  means in a Gaussian mixture model and for other applications
  in machine learning.
  The $d$th-order empirical moment tensor of a set of $p$ observations of $n$ variables
  is a symmetric $d$-way tensor.
  Our goal is to find a low-rank tensor approximation
  comprising $r \ll p$ symmetric outer products.
  The challenge is that forming the empirical moment tensor costs  $O(pn^d)$ operations and $O(n^d)$ storage,
  which may be prohibitively expensive; additionally, the algorithm to
  compute the low-rank approximation costs  $O(n^d)$  per iteration.
  Our contribution is  avoiding formation of the
  moment tensor, computing the low-rank tensor approximation of the moment
  tensor \emph{implicitly} using $O(pnr)$ operations per iteration and no extra memory.
  This advance opens the door to more applications of higher-order
  moments since they can now be efficiently computed.
  We present numerical evidence of the computational savings and show
  an example of estimating the means for higher-order moments. 
\end{abstract}

\begin{keywords}
  higher-order moments, higher-order cumulants, Gaussian mixture models, symmetric tensor decomposition, implicit tensor formation
\end{keywords}

\section{Introduction}\label{sec:Intro}

Moments and cumulants are commonly used in statistical analysis of random variables.
They are involved in testing normality of data, estimating parameters of distributions, detecting outliers, etc.~\cite{G.C.Casella2001,KoSr05,Ko08a,Mc18}.
Let $V \in \Real^n$ be a multivariate random variable.
The first moment and cumulant are simply the expected value, $\mathbb{E}(V)$.
The second moment is the expected value of the outer product of the random variable with itself, $\mathbb{E}(V \otimes V) \in \Real^{n \times n}$.
The second cumulant, also known as the covariance, is the expected value of the outer product of the \emph{centered} observations, i.e., with the mean subtracted off \cite{KoSr05,Mc18, McCullagh2009}.
In general, 
the $d$th moment is the expected value of the \emph{$d$-way} outer product of the random variable with itself, $\mathbb{E}(V \otimes V \otimes \cdots \otimes V)$, forming a $d$-way symmetric tensor of dimension $n$.
The $d$th cumulant is based on the $d$th moment and subtracts off appropriate lower-order effects, but we omit the formulas here and instead refer readers to McCullaugh \cite{Mc18, McCullagh2009}.
The third cumulant is often called the \emph{skewness} and measures the asymmetry of the distribution; for instance,
Gaussian distributions have zero skewness.
The fourth cumulant is called the kurtosis of a distribution and measures the sharpness or flatness  of the distribution~\cite{KoSr05}.

Tensor decomposition of moment and cumulant tensors are used in a variety of statistical and data science applications, including
independent component analysis and blind source separation \cite{Ca91a,Co02,DeDeVa01},
clustering \cite{ShZaHa06,CiJaZdAm07},
learning Gaussian mixture models \cite{HsKa13,AnBeGoRa14,GoVeXi14,GeHuKa15,SeJaAn16},
latent variable models \cite{AnGeHsKa14,AnGeHsKa14a},
outlier detection \cite{Do18,AdKoKeSh19},
feature extraction in hyperspectral imagery \cite{GeWa19},
and multireference alignment \cite{PeWeBaRi19}.
In these cases, it is assumed  that the empirical higher-order moment is already computed.
However, computing these moments is extremely expensive when $n$ is even moderately large and especially for $d \geq 4$ because
the cost is $O(pn^d)$ for $p$ observations.
There is some concern about this expense, e.g., Domino et al.~\cite{DoGaPa18} show how to reduce the complexity by a factor of $d!$ by
exploiting symmetry.
Our goal is to show how to avoid the formation expense altogether.

We let $\X$ denote the empirical estimate of the $d$th moment based on $p$ observations, of size $n^d$.
We consider the problem of computing a low-rank  symmetric canonical polyadic (CP) tensor decomposition, $\M$:
\begin{equation}\label{eq:cp_element}
  \X \approx \M \equiv \rsum \mterm
  \quad\Leftrightarrow\quad
 \me* = \rsum \lj \prod_{k=1}^d \Aikj \text{ for all } i_1,i_2,\dots,i_d \in \set{1,\dots,n},
\end{equation}
where $\me*$ is the $(i_1,i_2,\dots,i_d)$ entry of $\M$,
$r \ll p$ is the \emph{rank} of the approximation $\M$,
$\lj$ is the $j$th entry of the \emph{weight vector} $\lvec \in \Real^r$,
and $\Aj$ and $\Aikj$ are the $j$th column and $(i_k,j)$ entry, respectively, of the \emph{factor matrix} $\A \in \Real^{n \times r}$.
The superscript $\otimes d$ denotes the $d$-way outer product.
An illustration of the symmetric CP decomposition is shown in \cref{fig:symcp}.
The symmetric CP problem is well studied \cite{CoGoLiMo08,KyEr08,BeGiId11,Ni14,Ko15}.

\begin{figure}
  \centering
  \includegraphics[scale=0.6]{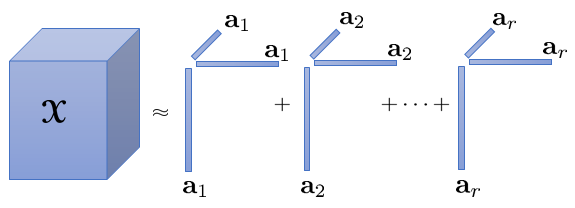}
  \caption{Illustration of the rank-$r$ low-rank approximation for a 3-way tensor ($d=3$).}
  \label{fig:symcp}
\end{figure}

We revisit the symmetric CP problem to consider the case
that the data tensor $\X$ has special structure as follows.
Suppose we are given a set of $p$ observations, denoted $\set{\Vl{1}, \Vl{2}, \dots, \Vl{p}}$,
of an $n$-dimensional real-valued random variable $V$. Then the $d$th empirical moment is given by
\begin{equation}\label{eq:moment_tensor_element}
  \X = \frac{1}{p} \psum \Vlod
  \quad \Leftrightarrow \quad
  \xe*  = \frac{1}{p} \psum \prod_{k=1}^d \Vikl \qtext{for all} i_1,i_2,\dots,i_d \in \set{1,\dots,n},
\end{equation}
where $\xe*$ is the $(i_1,i_2,\dots,i_d)$ entry of the moment tensor $\X$ and
$\Vikl$ denotes the $i_k$ entry of observation $\Vl$.
The tensor $\X$ is symmetric because it is invariant under any permutation of the indices, which
is a consequence of the order of multiplication being irrelevant.

The difficulty that we seek to overcome is that the storage and computational costs of forming and working with the moment tensor can be prohibitive.
The work to construct $\X$ is $O(pn^d)$, the storage is $O(n^d)$,
and the work per iteration to compute a rank-$r$ approximation is $O(rn^d)$.
For $d=3$ and $n=1000$, $\X$ requires 8~GB of storage;
for $d=4$ and $n=200$, $\X$ requires 12~GB of storage.%
\footnote{The storage cost in gigabytes is calculated as $n^d \cdot 8/10^9$. Technically, we could reduce the storage costs by a factor of $d!$ by exploiting symmetry \cite{BaKoPl11}, but this drives up the computational cost due to poor data locality.}
The per iteration storage and floating point operations per iteration cost to compute a rank $r=10$ approximation
are shown in \cref{fig:flops} for different values of $d$ and $n$.

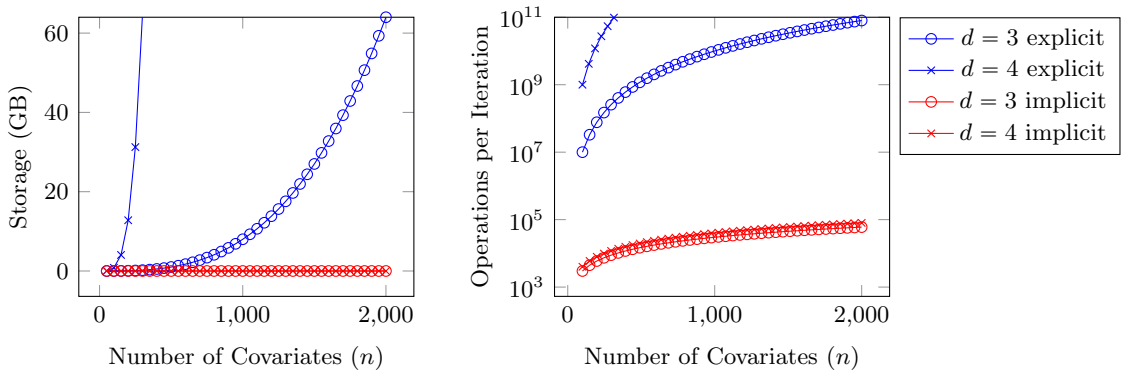
\begin{figure}
  \centering
    \begin{tikzpicture}
      \begin{axis}[
        ylabel shift = -3 pt,
        font=\footnotesize,
        scale = 0.65, transform shape,
        xlabel={Number of Covariates ($n$)},
        ylabel={Storage (GB)},
        cycle list name = color,
        ymax = 64,
        ]
        \addplot+[samples at={50,100,...,2000},mark=o,blue]{8*x^3/10^9};
        \addplot+[samples at={50,100,...,600},mark=x,blue]{8*x^4/10^9};
        \addplot+[domain=50:2000,samples=40,mark=o,red]{8*x*3*10/10^9};
        \addplot+[domain=50:2000,samples=40,mark=x,red]{8*x*4*10/10^9};
      \end{axis}
    \end{tikzpicture}
  ~~
    \begin{tikzpicture}
      \begin{semilogyaxis}[
        ylabel shift = -5 pt,
        font=\footnotesize,
        scale = 0.65, transform shape,
        xlabel={Number of Covariates ($n$)},
        ylabel={Operations per Iteration},
        legend entries = {$d=3$ explicit, $d=4$ explicit, $d=3$ implicit, $d=4$ implicit},
        legend pos = outer north east,
        cycle list name = color,
        ymax = 1e11,
        ]
        \addplot+[domain=100:2000,samples=40,mark=o,blue]{10*x^3};
        \addplot+[domain=100:400,samples=8,mark=x,blue]{10*x^4};
        \addplot+[domain=100:2000,samples=40,mark=o,red]{10*x*3};
        \addplot+[domain=100:2000,samples=40,mark=x,red]{10*x*4};
      \end{semilogyaxis}
    \end{tikzpicture}
    \caption{Comparison of storage required and floating-point operations per iteration
    to compute a rank $r=10$ symmetric canonical tensor decomposition
    approximation for $d$th moment for different numbers of covariates
    ($n$) when the empirical moment tensor is stored explicitly or
    implicitly.}
  \label{fig:flops}
\end{figure}

To address this problem, our contribution is to show that we can construct a low-rank approximation of a symmetric moment tensor without ever \emph{explicitly} forming it, which we refer to as \emph{implicit} computation of the symmetric CP decomposition.
This approach reduces the computational cost per iteration to $O(pnr)$. We can use the low-rank estimates of the moments to also produce approximations to the cumulants and do other calculations. Implicit calculation means that we can analyze much larger data sets.  
Avoiding formation of an expensive operator or array is a common practice in numerical methods, e.g., in ``matrix-free'' methods and other contexts.
For instance, a similar implicit approach is used in the context of (non-symmetric) tensor decomposition for computational chemistry
in which case the original tensor also has a sum-of-rank-one components format \cite{BeAuEsHa11}, but that work was specific to
reducing the cost of tensor contractions in the context of the computational chemistry application (post-Hartree-Fock electronic structure methods). 
To the best of our knowledge, the proposal to do implicit computation in the approximation of symmetric moment tensors is novel.

The paper is organized as follows. Notation and background on the symmetric tensor decomposition are established in~\cref{sec:Prelims}. The equivalence of the implicit and explicit approaches is shown in \cref{sec:ImpDecomp}.
In \cref{sec:Results}, we numerically demonstrate the computational savings and illustrate that larger problems are accessible, showing
examples based on estimating the means in a spherical Gaussian mixture model.
We discuss a stochastic approach that can be used for extremely large or online computation in \cref{sec:extend-probl-with} and conclusions in \cref{sec:Conclusion}.
The overall goal of this work is to make higher-order moments more accessible and usable for statistical and data analysis.

\section{Notation and background}\label{sec:Prelims} 

Lowercase letters denote scalars, e.g., $a$.
Lowercase bold letters denote vectors, e.g. $\Vc{A}$; and the $j$th element of $\Vc{a}$ is $a_j$.

Uppercase bold letters denote matrices, e.g., $\A$;
the $j$th column vector of matrix $\A$ is denoted as $\Vc{a}_j$;
and the $(j,k)$ element of $\A$ is $a_{jk}$.
The notation $\A \ast \Mx{B}$ denotes elementwise multiplication for two matrices of the same size.
The notation $[\A]^d$ denotes the elementwise power operation, i.e., each element is raised to the $d$th power.
For a vector $\Vc{a} \in \Real^n$, the notation $\Mx{D}_{\Vc{a}} \equiv \diag(\Vc{a})$, i.e., the $n \times n$ diagonal matrix with $\Vc{a}$ on its diagonal.

Uppercase bold letters in Euler script, such as $\X$, denote higher-order tensors, i.e., $d$-way arrays with $d \geq 3$.
We say $d$ is the order of the tensor.
The $(i_1,i_2,\dots,i_d)$ element of $\X$ is denoted as $\xe*$.
For notational convenience, we let $i$ denote a
\emph{multiindex} such that $i \equiv (i_1,i_2,\ldots,i_d)$, thus $\xe \equiv \xe*$.
We denote the space of real-valued tensors of order $d$ and dimension $n$ by $\Rdn$.
In storage and computational complexity analyses, we treat the tensor order, $d$, as a constant.

\subsection{Inner product and norm of tensors} \label{sec:IPNorm}
The inner product of two tensors $\X, \T{Y} \in \Real^{[d,n]}$ is the sum of the product of their corresponding entries, i.e.,
\begin{equation} \label{eq:tensorIP}
  \dotprod{\X}{\T{Y}} 
  =\bigsum \xe* \, y_{i_1 i_2\cdots i_d}.
\end{equation}
If $\Vc{x},\Vc{Y}$ are vectors, then $\dotprod{\Vc{x}}{\Vc{y}} = \Vc{x}^T \Vc{y}$.
The norm of a tensor is the square root of the sum of the squares of its entries, i.e.,
$\| \X \|^2= \dotprod{\X}{\X}$.
The cost to compute the inner product or norm is $O(n^d)$.

\subsection{Symmetric tensors}
A tensor is called \emph{symmetric} if its entries are invariant under any permutation of the indices~\cite{CoGoLiMo08}.
We let $\Sdn \subset \Rdn$ denote the subspace of symmetric tensors.
From \cite{BaKoPl11},
the number of unique entries in a $d$-way $n$-dimensional symmetric tensor is
\begin{displaymath}
\left( {{n+d-1}\atop{d}} \right) \approx \frac{n^d}{d!}.  
\end{displaymath}

\subsection{Tensor times same vector}
\label{sec:tensor-times-same}

A commonly used tensor operation is called \emph{tensor times same vector} (TTSV) \cite{KoMa11}.
The computation of the TTSV \emph{in all modes} for a tensor $\X \in \Sdn$ and vector $\Vc{a} \in \Real^n$
is denoted by $\X\Vc{a}^d$  and the result is a scalar:
\begin{equation}\label{eq:ExpXbd}
\X \Vc{a}^d = \bigsum \left( \xe* \prod_{k=1}^d a_{i_k} \right).
\end{equation}
This operation is visualized in \cref{fig:ttsv}.
\begin{figure}
  \centering
 \includegraphics[scale=0.6]{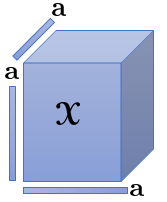}  
  \caption{Illustration of symmetric tensor times same vector (TTSV) in all modes for a three-way ($d=3$) tensor.}
  \label{fig:ttsv}
\end{figure}
The computation of the TTSV \emph{in all modes but one} is denoted $\X\Vc{a}^{d-1}\in \Real^n$ and results in a vector of size $n$:
\begin{equation}\label{eq:ExpXbdd}
  \left( \X \Vc{a}^{d-1} \right)_{i_1}
  = \bigsum{2} \left( \xe* \prod_{k=2}^d a_{i_k} \right)
  \qtext{for all} i_1 \in \set{1,\dots,n}.
\end{equation}
The choice of ``left out'' mode in the computation of all modes but one does not matter because of the symmetry.
Observe that $\X \Vc{a}^d = \dotprod{\X\Vc{a}^{d-1}}{\Vc{a}}$,
so we can use the result of the TTSV in all modes but one to compute the TTSV in all modes.
The cost to compute either version of the TTSV is $O(n^d)$.

\subsection{Symmetric outer product} \label{sec:OP}
The outer product of an $n$-dimensional vector with itself results in a rank-1 symmetric matrix of size $n \times n$ such that
\begin{equation*}
  (\Vc{a} \otimes \Vc{a})_{ij} = a_{i} a_{j}
   \qtext{for all} i,j \in \set{1,\dots,n}.
\end{equation*}
Here, the outer product is denoted by $\otimes$.
The three-way tensor outer product of an $n$-dimensional vector with itself results in a three-way symmetric tensor such that
\begin{equation*}
(\Vc{a} \otimes \Vc{a} \otimes \Vc{a})_{ijk} =a_{i}a_{j}a_{k} \qtext{for all} i,j,k \in \set{1,\dots,n}.
\end{equation*}
Generally speaking, the $d$-way tensor outer product of $\Vc{a}$ with itself $d$-times, denoted as $\Vc{a}^{\otimes d}$, results in a $d$-way rank-1 symmetric tensor such that
\begin{equation}\label{eq:outer_product}
  \Vc{a}^{\otimes d} =
  \underbrace{\,\Vc{a} \otimes \Vc{a} \otimes \ldots \otimes \Vc{a}\,}_{d \hbox{ times}} 
  \qtext{with}
  \left(\Vc{a}^{\otimes d}\right)_{i_1 i_2 \dots i_d} = \prod_{k=1}^d \Vc{a}_{i_k}
  \qtext{for all}
  i_1,\ldots,i_d \in \set{1,\dots,n}.	
\end{equation}
The cost to form $\Vc{a}^{\otimes d}$ explicitly is $O(n^d)$.

\subsection{Symmetric Kruskal tensors}
\label{sec:symm-krusk-tens}
We define $\Kdnr \subset \Sdn$ to be the space of symmetric
\emph{Kruskal} tensors \cite{BaKo07} that can be written as the sum of $r$
components where each \emph{component} is a scalar times a $d$-way
$n$-dimensional rank-one symmetric tensor.
Mathematically, 
\begin{equation} \label{eq:symmDecomp}
  \Kdnr \equiv \Set{\rsum \mterm | \lvec \in \Real^r, \A \in \Real^{n \times r}} \quad \subset \Sdn.
\end{equation}
The (symmetric) rank of a symmetric tensor is the \emph{minimal} $r$ such that the tensor can be expressed as the sum of $r$ symmetric rank-one components.
Hence, every tensor in $\Kdnr$ is of rank at most $r$.
Colloquially, $\Kdnr$ may be referred to as the set of rank-$r$ tensors even though $r$ is only an upper bound on the rank.

\subsection{Symmetric tensor decomposition}
\label{sec:symm-tens-decomp}

The goal of symmetric tensor decomposition
is to find a rank-$r$ symmetric CP decomposition $\M \in \Kdnr$ that is a good approximation of a given tensor $\X \in \Sdn$.
Ideally, we would choose a value of $r$ that is equal to the rank of $\X$;
however, there is no direct way to find or compute the rank of a symmetric tensor \cite{CoGoLiMo08}.
Indeed, computing the rank has been shown to be NP-hard \cite{HiLi13}.
For this work, we assume that the value of $r$ is specified by the user.

There are a few ways to formulate the optimization problem \cite{Ko15}, but here we simply consider
the sum of the squared errors between the entries of the approximation $\M$ and data tensor $\X$.
Thus, for a given rank $r$, computing a rank-$r$ symmetric CP decomposition reduces to solving the
nonconvex optimization problem:
\begin{equation}
  \label{eq:optFunc}
  \min_{\lvec,\A}
  f(\lvec,\A)  \equiv \| \X - \M \|^2
  \qtext{s.t.}
  \M \equiv \rsum \mterm.
\end{equation}
Computing $f(\lvec,\A)$ requires $O(rn^d)$ operations to form $\M$ and compute the difference.

We can compute the gradients and use a standard first-order optimization method to solve \cref{eq:optFunc}, though we are not
guaranteed of finding an optimum and usually employ multiple starts in the hopes of finding a good solution.
From \cite{Ko15}, the corresponding gradients are 
\begin{align}
   \label{eq:gradl} 
  \frac{\partial f}{\partial \lj} & = -2 \Biggl[ \X \Aj^d - \sum_{k=1}^r \lj{k} \; \dotprod{\Aj}{\Aj{k}}^d \Biggr] \qtext{and}\\
  \label{eq:gradb}
  \frac{\partial f}{\partial \Aj} & =-2d\lj \Biggl[ \X \Aj^{d-1} 
                                    - \sum_{k=1}^r \lj{k} \; \dotprod{\Aj}{\Aj{k}}^{d-1}\Aj{k} \Biggr],
\end{align}
for $j=1,2,\dots,r$.
The work to compute the gradient is dominated by the TTSV calculations, at a cost of $O(rn^d)$.

\section{Implicit decomposition} \label{sec:ImpDecomp}

In this paper, we focus on the case where the data tensor $\X$ has special structure.
Let $\V = \begin{bmatrix} \Vl{1} & \Vl{2} & \cdots & \Vl{p} \end{bmatrix} \in \Real^{n \times p}$
be a data matrix whose columns represent $p$ observations of a random real-valued $n$-vector.
For convenience, we define the constant vector $\nvec = \begin{bmatrix}  1/p & 1/p & \cdots & 1/p \end{bmatrix}^T \in \Real^p$.
The results we show later will apply to arbitrary positive $\nvec$.
We typically assume $n \ll p$, meaning that we have many more observations than variables  (also known as covariates or features).
As described in the introduction, the individual entries of the $d$th moment are given by \cref{eq:moment_tensor_element},
so the full tensor can be expressed in outer product notation as
\begin{equation} \label{eq:tensorX}
  \X = \psum \xterm \quad \in \Kdnp.
\end{equation}
Hence, the tensor $\X$ is a symmetric Kruskal tensor \cite{BaKo07} with $n^d$ entries, and
$\rank(\X) \leq p$.
Our goal is to show that we can reduce the computational expense from $O(rn^d)$ to $O(rnp)$ for such data tensors.
We do this by working with $\X$ implicitly, i.e., 
we can compute the function values and gradients for the rank-$r$ tensor decomposition of $\X$ using only  $\nvec \in \Real^p$, $\V \in \Real^{n \times p}$ so that
the full tensor $\X \in \Rdn$ is never formed.

\subsection{Key lemmas}
\label{sec:key-lemmas}

Our results depend on two key lemmas for symmetric rank-1 $d$-way tensors.
The first says that the inner product of two symmetric rank-1 tensors reduces to the $d$th power of the dot product of the factor vectors.
The second says that the TTSV in all modes but one has a reduced representation that is similar to that for the inner product.
The proofs are straightforward and left as an exercise for the reader.
\begin{lemma}\label{lem:rank_one_inner_product}
  Let $\Vc{a}, \Vc{b} \in \Real^n$. Then the inner product of the $d$-way symmetric rank-1  tensors constructed from the vectors satisfies
  $\big< \Vc{a}^{\otimes d}, \Vc{b}^{\otimes d} \big> = \big< \Vc{a}, \Vc{b} \big>^d$.
\end{lemma}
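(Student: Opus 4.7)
The plan is to unfold both sides by pure definition-chasing and then exploit the fact that a sum of a product over independent indices factors into a product of sums. Since both $\mathbf{a}^{\otimes d}$ and $\mathbf{b}^{\otimes d}$ are built from a single vector via \cref{eq:outer_product}, their entries are pure products over the multi-index, and the inner product from \cref{eq:tensorIP} decouples cleanly across modes.

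More concretely, I would first apply \cref{eq:tensorIP} to write
\[
  \langle \mathbf{a}^{\otimes d}, \mathbf{b}^{\otimes d} \rangle
  = \sum_{i_1=1}^n \cdots \sum_{i_d=1}^n \bigl(\mathbf{a}^{\otimes d}\bigr)_{i_1\cdots i_d} \bigl(\mathbf{b}^{\otimes d}\bigr)_{i_1\cdots i_d}.
\]
Then I would substitute the entrywise formula from \cref{eq:outer_product} to obtain $\prod_{k=1}^d a_{i_k} b_{i_k}$ inside the summand. Because the summation indices $i_1,\dots,i_d$ are independent and each factor $a_{i_k} b_{i_k}$ depends on only one of them, the nested sum factors as
\[
  \prod_{k=1}^d \Bigl(\sum_{i_k=1}^n a_{i_k} b_{i_k}\Bigr) = \bigl(\langle \mathbf{a}, \mathbf{b}\rangle\bigr)^d,
\]
which is exactly the claim.

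There is no real obstacle here: the argument is a one-line calculation once the definitions are unwound, which matches the authors' remark that the proof is left to the reader. The only thing to be careful about is notational hygiene, namely being explicit about the factoring step (sum of a separable product equals product of sums over independent indices), since this identity is the whole content of the lemma. I would present the three equalities above in a single display and then state the conclusion.
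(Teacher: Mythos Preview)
Your proposal is correct and is precisely the straightforward definition-unfolding argument the authors have in mind; the paper itself does not supply a proof but states that it is ``left as an exercise for the reader,'' and your computation is exactly that exercise carried out.
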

\begin{lemma}\label{lem:rank_one_ttsv}
  Let $\Vc{a}, \Vc{b} \in \Real^n$. Let $\T{S}$ be the symmetric rank-1
  tensor defined by $\T{S} \equiv \Vc{b}^{\otimes d}$. Then the TTSV in all modes but one of tensor $\T{s}$ with the vector $\Vc{a}$ satisfies
  $\T{S} \Vc{a}^{d-1} = \big< \Vc{a}, \Vc{b} \big>^{d-1} \Vc{b}$. Further, the TTSV in all modes satisfies $\T{S} \Vc{a}^d = \big< \Vc{a}, \Vc{b} \big>^d$.
\end{lemma}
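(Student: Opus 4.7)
The plan is to prove both identities by direct substitution of the defining formula $s_{i_1 i_2 \dots i_d} = \prod_{k=1}^d b_{i_k}$ for the entries of $\T{S} = \Vc{b}^{\otimes d}$ into the elementwise definitions of TTSV given in \cref{eq:ExpXbd,eq:ExpXbdd}. The crucial algebraic observation is that once the entries factor as a product over $k$, the multiple sum over $i_2,\ldots,i_d$ decouples into a product of independent single-index sums, each of which is exactly the inner product $\langle \Vc{a},\Vc{b}\rangle$.

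First I would prove the ``all modes but one'' identity. Fix $i_1 \in \{1,\dots,n\}$ and expand
\begin{equation*}
  (\T{S}\Vc{a}^{d-1})_{i_1}
  = \sum_{i_2=1}^n \cdots \sum_{i_d=1}^n \Bigl(\prod_{k=1}^d b_{i_k}\Bigr) \prod_{k=2}^d a_{i_k}
  = b_{i_1} \prod_{k=2}^d \Bigl(\sum_{i_k=1}^n a_{i_k} b_{i_k}\Bigr)
  = \langle \Vc{a},\Vc{b}\rangle^{d-1}\, b_{i_1}.
\end{equation*}
Since this holds for every $i_1$, we get $\T{S}\Vc{a}^{d-1} = \langle \Vc{a},\Vc{b}\rangle^{d-1}\Vc{b}$, which is the first claim. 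The ``all modes'' identity then follows immediately from the observation made in \cref{sec:tensor-times-same} that $\T{S}\Vc{a}^d = \langle \T{S}\Vc{a}^{d-1},\Vc{a}\rangle$: plugging in the formula just derived gives $\T{S}\Vc{a}^d = \langle \Vc{a},\Vc{b}\rangle^{d-1}\langle \Vc{b},\Vc{a}\rangle = \langle \Vc{a},\Vc{b}\rangle^d$. Alternatively, one could cite \cref{lem:rank_one_inner_product} directly by recognizing $\T{S}\Vc{a}^d$ as $\langle \Vc{b}^{\otimes d},\Vc{a}^{\otimes d}\rangle$, which yields the same conclusion.

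There is no real obstacle here; the only thing to be careful about is the index bookkeeping and the decoupling step. The proof relies on the fact that the summand factors as a simple product indexed by $k$, so Fubini-style rearrangement lets each inner sum be evaluated independently. Since the paper itself notes that the proof is straightforward and leaves it as an exercise, I would keep the write-up to roughly the two displayed lines above.
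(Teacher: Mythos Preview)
Your proof is correct and is exactly the direct verification the paper has in mind; indeed, the paper states that the proofs of \cref{lem:rank_one_inner_product,lem:rank_one_ttsv} ``are straightforward and left as an exercise for the reader,'' so there is no author-supplied argument to compare against. The decoupling of the iterated sum into a product of inner products is the natural (and essentially only) step, and your use of $\T{S}\Vc{a}^d = \langle \T{S}\Vc{a}^{d-1},\Vc{a}\rangle$ from \cref{sec:tensor-times-same} for the second claim is clean.
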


\subsection{Computing TTSV for the gradient implicitly}
\label{sec:comp-ttsv-impl}

The most expensive computations in the gradient evaluations are the TTSVs.
\Cref{lem:ttsv} explains how to compute the TTSV for $(d-1)$ vectors using the constituent elements of $\X$.
We remind the reader that $\Dnvec = \diag(\nvec)$.

\begin{lemma}\label{lem:ttsv}
  Let $\X = \psum \xterm \in \Kdnp$ and $\Vc{a} \in \Real^n$. Then
  $\X \Vc{a}^{d-1} = \V \Dnvec [\V^T \Vc{a}]^{d-1}$.
\end{lemma}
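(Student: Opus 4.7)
The plan is to proceed by direct computation, expanding $\X$ in its Kruskal form and exploiting linearity of the TTSV operation in its tensor argument. Specifically, since $\X = \sum_{\ell=1}^p \nu_\ell \Vc{v}_\ell^{\otimes d}$, the TTSV satisfies
\begin{equation*}
  \X \Vc{a}^{d-1} = \psum \nl \bigl( \Vlod \bigr) \Vc{a}^{d-1},
\end{equation*}
so the problem reduces to handling each rank-1 summand individually.

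Next, I would apply \Cref{lem:rank_one_ttsv} with $\Vc{b} = \Vl$ to each of the $p$ symmetric rank-1 terms. This immediately rewrites each $(\Vlod) \Vc{a}^{d-1}$ as $\langle \Vc{a}, \Vl\rangle^{d-1} \Vl$, producing
\begin{equation*}
  \X \Vc{a}^{d-1} = \psum \nl \dotprod{\Vc{a}}{\Vl}^{d-1} \Vl.
\end{equation*}

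Finally, I would reinterpret this sum as matrix operations on $\V = [\Vl{1}\ \Vl{2}\ \cdots\ \Vl{p}]$. The scalar $\dotprod{\Vc{a}}{\Vl}$ is precisely the $\ell$th entry of the vector $\V^T \Vc{a} \in \Real^p$, so $[\V^T \Vc{a}]^{d-1}$ (elementwise power) has $\ell$th entry $\dotprod{\Vc{a}}{\Vl}^{d-1}$. Multiplying by $\Dnvec$ on the left scales the $\ell$th entry by $\nl$, and then left-multiplication by $\V$ takes a linear combination of the columns $\Vl$ weighted by those scalars — exactly the sum above. Assembling these identifications yields the claimed equality $\X \Vc{a}^{d-1} = \V \Dnvec [\V^T \Vc{a}]^{d-1}$.

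The argument is essentially routine once \Cref{lem:rank_one_ttsv} is in hand; no step presents a genuine obstacle. The only place requiring minor care is the final bookkeeping: verifying that the elementwise $(d-1)$st power notation $[\cdot]^{d-1}$, the diagonal scaling by $\Dnvec$, and the column-wise action of $\V$ compose in the right order to reproduce the scalar-weighted sum of columns. This is just a notational check rather than a mathematical hurdle.
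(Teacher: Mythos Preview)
Your proposal is correct and follows essentially the same approach as the paper: expand $\X$ by linearity, apply \Cref{lem:rank_one_ttsv} to each rank-one summand, and then recognize the resulting weighted sum of columns as $\V \Dnvec [\V^T \Vc{a}]^{d-1}$. In fact your write-up is slightly more detailed than the paper's, which stops at the summation $\sum_\ell \nl \dotprod{\Vl}{\Vc{a}}^{d-1}\Vl$ and leaves the final matrix identification to the reader.
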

\begin{proof}
  Using the definition of $\X$ and \cref{lem:rank_one_ttsv}, we have
  \begin{displaymath}
    \X \Vc{a}^{d-1} = \psum (\xterm)\,\Vc{a}^{d-1} = \psum \nl \; \dotprod{\Vl}{\Vc{a}}^{(d-1)} \Vl.
  \end{displaymath}
\end{proof}

The standard calculation of the TTSV is $O(n^d)$ work.
The dominant cost in the implicit version is the matrix-vector multiplies involving $\V$ for a cost of $O(np)$.
If we define the matrix $\Y \in \Real^{n \times r}$ such that $\Yj \equiv \X\Aj^{d-1}$, then we can calculate
all the TTSVs implicitly using
\begin{equation}
  \label{eq:Y}
  \Y \equiv \V \Dnvec [\V^T \A]^{d-1},
\end{equation}
for a cost of $O(npr)$ operations. The reduction from $O(n^d)$ to $O(npr)$ work is the cornerstone of the improvements yielded by the implicit method.

\subsection{Computing the function value implicitly}
\label{sec:comp-funct-value}

To compute the function value \cref{eq:optFunc}, we first rewrite it as:
$\| \X - \M \|^2 = \|\X\|^2+\|\M\|^2-2 \dotprod{\X}{\M}$.
The first term is a constant and can be ignored.
We compute the remaining two terms implicitly without forming $\X$, as described in the lemmas that follow.

\begin{lemma}\label{lem:norms}
  Let $\M = \rsum \mterm \in \Kdnr$.
  Then
  $\| \M \|^2 = \lvec^T [\A^T \A]^d \lvec$.    
\end{lemma}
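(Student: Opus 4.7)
The plan is to expand the squared norm as an inner product of $\M$ with itself and exploit bilinearity together with Lemma~\ref{lem:rank_one_inner_product} to collapse each rank-one cross term into a power of a scalar dot product.

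First, I would write
\begin{equation*}
  \| \M \|^2 = \dotprod{\M}{\M} = \rsum \sum_{k=1}^r \lj \lj{k} \, \dotprod{\Ajod}{\Ajod[k]},
\end{equation*}
using the bilinearity of the tensor inner product from \cref{sec:IPNorm}. Then for each $(j,k)$ pair I would invoke \cref{lem:rank_one_inner_product} to replace $\dotprod{\Ajod}{\Ajod[k]}$ with $\dotprod{\Aj}{\Aj{k}}^d$, yielding
\begin{equation*}
  \| \M \|^2 = \rsum \sum_{k=1}^r \lj \lj{k} \, \dotprod{\Aj}{\Aj{k}}^d.
\end{equation*}

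Next I would recognize that $\dotprod{\Aj}{\Aj{k}}$ is precisely the $(j,k)$ entry of the Gram matrix $\A^T \A$, so $\dotprod{\Aj}{\Aj{k}}^d$ is the $(j,k)$ entry of the elementwise $d$th power $[\A^T \A]^d$ (using the notation for elementwise powers defined in \cref{sec:Prelims}). The double sum then reads off as the quadratic form $\lvec^T [\A^T \A]^d \lvec$, which completes the identity.

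The argument is essentially a bookkeeping exercise, so I do not expect any real obstacle; the only subtlety is keeping the two different multiplications distinct (the elementwise $d$th power of the Gram matrix versus the ordinary matrix product against $\lvec$), and making sure the reader recalls that $\dotprod{\Aj}{\Aj{k}}^d$ here means the ordinary scalar $d$th power. Note that this representation also makes the implicit cost clear: forming $\A^T \A$ costs $O(nr^2)$, after which $[\A^T \A]^d$ and the final quadratic form cost only $O(r^2)$, so the entire computation is independent of the $O(n^d)$ ambient tensor size.
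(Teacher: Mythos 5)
Your proof is correct and follows the same route as the paper: expand $\dotprod{\M}{\M}$ by bilinearity into a double sum of rank-one inner products and collapse each via \cref{lem:rank_one_inner_product} to $\dotprod{\Aj}{\Aj{k}}^d$. You additionally spell out the final identification of the double sum with the quadratic form $\lvec^T [\A^T\A]^d\lvec$, which the paper's proof leaves implicit; this is a harmless (and arguably helpful) elaboration, not a different argument.
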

\begin{proof}
  Using the definition of $\M$, rearranging terms, and applying \cref{lem:rank_one_inner_product}, we have
  \begin{displaymath}
    \big< \M, \M \big>
    = \left< \rsum \mterm, \rsum \mterm \right> 
    = \rsum \sum_{k=1}^r \lj \lj{k} \big< \Aj^{\otimes d}, \Aj{k}^{\otimes d} \big>
    = \rsum \sum_{k=1}^r \lj \lj{k} \big< \Aj, \Aj{k} \big>^d.
  \end{displaymath}
\end{proof}

The cost to form $\M$ is $O(rn^d)$, and 
the cost of computing its norm is $O(n^d)$.
Exploiting its structure using \cref{lem:norms} reduces the cost to $O(nr^2)$.
Although $\|\X\|^2$ is technically not necessary (since it is a constant term),
we include it here for completeness.
\Cref{lem:norms} applied to $\X$ yields $\| \X \|^2 = \nvec^T (\V^T\V)^d \nvec$, in which case the cost to compute the norm is $O(np^2)$.

\begin{lemma}
  Let $\M = \rsum \mterm \in \Kdnr$.
  Then
  $\dotprod{\X}{\M} = \Vc{w}^T \lvec$
  where $w_j = \X \Aj^d$ for  $j \in \set{1,\dots,r}$.
\end{lemma}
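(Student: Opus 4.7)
The plan is to prove this by straightforward expansion of the inner product using linearity, followed by recognition that the resulting scalars are exactly TTSV-in-all-modes values. There is no real obstacle here; the whole content of the lemma is an easy unpacking of definitions, and the reason it is worth stating separately is that it exposes a vector $\Vc{w}$ whose entries can subsequently be computed implicitly in $O(npr)$ work rather than via $\X$.

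First I would substitute $\M = \rsum \mterm$ into $\dotprod{\X}{\M}$ and pull the sum and weights out by bilinearity of the tensor inner product defined in \cref{eq:tensorIP}, giving
\begin{equation*}
  \dotprod{\X}{\M} = \rsum \lj \, \dotprod{\X}{\Ajod}.
\end{equation*}
Next I would observe, directly from the definitions \cref{eq:tensorIP} and \cref{eq:ExpXbd}, that
\begin{equation*}
  \dotprod{\X}{\Ajod} = \bigsum x_{i_1 i_2 \cdots i_d} \prod_{k=1}^d \Aikj = \X \Aj^d,
\end{equation*}
since the entries of $\Ajod$ are precisely $\prod_{k=1}^d \Aikj$ by \cref{eq:outer_product}. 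Setting $w_j = \X \Aj^d$ then turns the previous display into $\dotprod{\X}{\M} = \rsum \lj w_j = \Vc{w}^T \lvec$, which is the claim.

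The remark I would append, to connect this lemma to the implicit machinery already developed, is that each $w_j$ need not be computed from $\X$ at all: using the identity $\X \Aj^d = \dotprod{\X \Aj^{d-1}}{\Aj}$ noted in \cref{sec:tensor-times-same}, and the matrix $\Y$ from \cref{eq:Y} whose columns are $\Yj = \X\Aj^{d-1}$, we obtain $w_j = \dotprod{\Yj}{\Aj}$. Hence the whole vector $\Vc{w}$ is just the diagonal of $\Y^T \A$, reusing the TTSV work already performed for the gradient at no additional asymptotic cost. This is what makes the lemma useful rather than merely a cosmetic rewriting.
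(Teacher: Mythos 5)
Your proof is correct and follows exactly the route the paper intends (the paper leaves this proof to the reader, but it is the same bilinearity-plus-TTSV expansion used for its proof of \cref{lem:norms}). Your closing remark that $w_j = \Yj^T\Aj$ reuses the already-computed $\Y$ also matches the paper's own discussion immediately following the lemma.
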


The proof is straightforward and left to the reader.
If $\Vc{w}$ is precomputed, the cost of the dot product is only $O(r)$.
We can compute $\Vc{w}$ using $\Y$ from \cref{eq:Y} as $w_j = \Yj^T \Aj$ for $j=1,\dots,r$ for a cost of $O(nr)$ operations.

\subsection{Implicit versus explicit algorithms}
\label{sec:impl-vers-expl}

Putting the information above together, we have the function and gradient evaluation algorithms for both the explicit and implicit cases in \cref{fig:algs}.
Recall that we are computing $f(\lvec,\A) \equiv \|\X - \M\|^2$ where $\M = \rsum \mterm$,
and the function and gradient are defined in \cref{eq:gradl,eq:gradb}.
The function and gradient can be used in any first-order optimization method to compute the best rank-$r$ model.\footnote{The problem is non-convex, so there is no guarantee of finding a global minimum. In practice, one usually obtains good results with a few random starting points.}

For the explicit case in \cref{alg:explicit}, the inputs are the (explicit) moment tensor $\X$,
the values of $\lvec \in \Real^r$ and $\A \in \Real^{n \times r}$ (used to form $\M$),
and $\alpha = \|\X\|^2$ which is a constant that can be ignored by setting it equal to zero.
The outputs are the function value ($f$) and gradients.
We let $\gl \in \Real^r$ denote the gradient with respect to $\lambda$
whose entries are defined by~\cref{eq:gradl}.
Likewise, we let $\gA \in \Real^{n \times r}$ denote the gradient with respect to $\A$;
the columns of $\gA$ are defined by \cref{eq:gradb}.

For the implicit case in \cref{alg:implicit}, the input data tensor $\X$
is replaced by its constituent parts, i.e., $\nvec \in \Real^p$ and $\V \in \Real^{n \times p}$.
Recall that $\nvec$ is the constant vector whose entries are $1/p$.
The implicit approach avoids explicitly forming the moment tensor, which would have an additional one-time cost of $O(pn^d)$
in addition to the extra computational cost in the function and gradient evaluation.
The differences in the explicit algorithm are highlighted in red.

In both algorithms, we define several new variables for efficient reuse of computations,
namely $\Mx{B} \equiv \A^T \A$, $\Mx{C} \equiv [\A^T\A]^{d-1}$ and
$\Vc{u} \equiv [\A^T\A]^d \lvec$.

Because we make maximal reuse of computations, the sole difference between the explicit and implicit method is in the computation of the matrix $\Y$, representing the TTSVs. Assuming $p,n \gg r$, computation of $\Y$ is the dominant cost of either method.
Lines 3--9 cost $O(nr^2)$.
The cost of computing $\Y$ (in Line 2) is $O(rn^d)$ for the explicit version as compared to $O(pnr)$ for the implicit version.
Hence, the implicit version is less expensive so long as $p \ll n^{d-1}$.

In terms of memory, both explicit and implicit have the same requirement for the outputs and temporaries:  $O(nr)$ storage.
For the explicit case, the input tensor $\X$ requires $O(n^d)$ storage.
For the implicit case, the inputs $\nvec$ and $\V$ require $O(np)$ storage.

The one-time optional cost to compute the input $\alpha = \|\X\|^2$ is $O(n^d)$ in the explicit case and $O(np^2)$ in the implicit case. For some larger values of $p$, the implicit cost may be more expensive than the explicit formation of $\X$. 

\begin{figure}
  \begin{minipage}{0.475\linewidth}
\begin{algorithm}[H]\small
  \caption{Explicit computation} \label{alg:explicit}
  \begin{algorithmic}[1]
    \Function{fg\_explicit}{${\X},\lvec, \A,\alpha$} %
    \LineFor{$j = 1,\dots,r$}{$\Vc{Y}_j=\X\Aj^{d-1}$}
    \LineFor{$j=1,\dots,r$}{$w_j=\Aj^T \Vc{y}_j$}
    \State $\Mx{B} = \A^T \A$
    \State $\Mx{C} = [\Mx{B}]^{d-1}$ %
    \State $\Vc{u} = (\Mx{B} \ast \Mx{C}) \lvec$ %
    \State $f= \alpha + \lvec^T  \Vc{u} - 2\Vc{w}^T \lvec$
    \State $\gl= -2 ( \Vc{w} - \Vc{u})$
    \State $\gA = -2 d ( \Y - \A \Dlvec \Mx{C} ) \Dlvec$ 
    \State \Return $f, \gl, \gA$
    \EndFunction
  \end{algorithmic}
\end{algorithm}%
\end{minipage}\hfil
  \begin{minipage}{0.475\linewidth}
\begin{algorithm}[H]\small
  \caption{Implicit computation} \label{alg:implicit}
  \begin{algorithmic}[1]
    \Function{fg\_implicit}{$\textcolor{red}{\nu,\V}, \lvec,\A,\alpha$}
    \State \textcolor{red}{$\Y = \V \Dnvec  [\V^T\A]^{d-1}$}
    \LineFor{$j=1,\dots,r$}{$w_j=\Aj^T \Vc{y}_j$}
    \State $\Mx{B} = \A^T \A$
    \State $\Mx{C} = [\Mx{B}]^{d-1}$ %
    \State $\Vc{u} = (\Mx{B} \ast \Mx{C}) \lvec$ %
    \State $f= \alpha + \lvec^T  \Vc{u} - 2\Vc{w}^T \lvec$
    \State $\gl= -2 ( \Vc{w} - \Vc{u})$
    \State $\gA = -2 d ( \Y - \A \Dlvec \Mx{C} ) \Dlvec$ 
    \State \Return $f, \gl, \gA$
    \EndFunction
  \end{algorithmic}
\end{algorithm}%
\end{minipage}
\caption{Algorithms for computation of the function and gradient for symmetric CP decomposition with an explicit data tensor $\X$ versus implicit computation using $\nvec$ and $\V$ where $\X = \psum \xterm$.
  Differences are shown in red.
  Here, the input $\alpha = \|\X\|^2$ is a constant term in the function that can be set to anything without impacting the optimization.
  Recall that powers outside bracketed matrices are elementwise powers.
}
\label{fig:algs}
\end{figure}

\section{Numerical results} \label{sec:Results}
In this section, we present results from numerical experiments to demonstrate
the effectiveness of the implicit approach for moment tensors.
All experiments were run on a dedicated computer with a dual socket Intel E5-2683v5 2.00GHz CPU (28 total cores) with 256 GB DDR3 memory, using MATLAB 2018a. No specific parallel coding was used, but some calls within MATLAB take advantage of multiple cores.
All methods are implemented using the Tensor Toolbox \cite{TensorToolbox}.
We use a standard first-order optimization: limited-memory BFGS with bound constraints (L-BFGS-B) \cite{ZhByLuNo97} 
as the optimization method
for the explicit and implicit methods, using the implementation provided by Stephen Becker's MATLAB wrapper \cite{BeckerLBFGSB}.
We use no lower or upper bounds, i.e., we set them to $-\infty$ and $+\infty$, respectively.
We use the default options (including the default memory of $m=5$), with the following exceptions: maximum iterations = 10,000; maximum total iterations = 50,000; and the gradient norm tolerance (\texttt{pgtol}) was set as specified in the subsections that follow.

\subsection{Timing comparison of explicit and implicit methods}
\label{sec:timing-comp-expl}

We compare the timings for the explicit and implicit methods for different scenarios depending on $d$~(order), $n$~(size), $p$~(number of samples), and $r$~(rank of approximation).
For each scenario,
the observation matrix $\V \in \Real^{n \times p}$ has entries from the uniform $(0,1)$ distribution, so we are not expecting the low-rank approximation to have any specific structure.
Since no structure is expected, we set the optimization tolerance to be relatively low, i.e., $\texttt{pgtol} = .05$.
For each scenario, we run the optimization ten times with ten different initial guesses and report average results.

The results are shown in \cref{tab:timing_compare}.
We report the mean and standard deviation of the total run time and number of iterations for the explicit and implicit methods.
The number of iterations is the total number of \emph{inner} iterations, i.e.,
the number of times that the function/gradient evaluation is called.
The table includes the average time per iteration, as computed for the data from all ten runs.
The best relative error for each scenario is not reported in the table but is in the range $(1 \times 10^{-3}$, $8 \times 10^{-3})$.
We also omit the time to create the explicit moment tensor, but it is always less than 10\% of the optimization time.
Before we dig into the results, we explain the variations in the number of iterations.
Given the same initial guess, 
the explicit and implicit functions and gradients are mathematically identical; however, small differences may be introduced because of numerical round-off errors in the different computations.
In the context of an optimization method, these small differences can result in different paths to the solution or even different solutions, as well as different numbers of iterations.
Nevertheless, the absolute difference in the final objective value
between any paired explicit and implicit run (i.e., with the same starting point) is $2 \times 10^{-5}$.

\begin{table}
  \centering\footnotesize
  \begin{tabular}{|*{4}{r|}r|r|*{4}{r@{$\pm$}l|}}
    \hline
\multicolumn{4}{|c|}{Scenario} & \multicolumn{2}{c|}{Time Per Iter.~(s)} & \multicolumn{4}{c|}{Total Time (s)} & \multicolumn{4}{c|}{Total Iterations} \\ 
\multicolumn{1}{|c|}{$d$} & \mchm{n} & \mchm{p} & \mchm{r} & \mch{Exp.} &\mch{Imp.} & \multicolumn{2}{c|}{Exp.} & \multicolumn{2}{c|}{Imp.} & \multicolumn{2}{c|}{Exp.} & \multicolumn{2}{c|}{Imp.} \\ \hline
3  &  75 &  750 &  5 & 5.30e-04 & \bf 4.36e-04 &   1.84 &  0.52 &   1.64 &  0.39 &  3473 &  959 &  3767 &  955 \\ \hline
3  &  75 &  750 & 25 & 1.91e-03 & \bf 5.91e-04 &  17.79 &  3.34 &   5.09 &  1.05 &  9296 & 1650 &  8606 & 1788 \\ \hline
3  &  75 & 3750 &  5 & \bf 5.17e-04 & 7.96e-04 &   2.08 &  0.63 &   3.11 &  1.08 &  4023 & 1249 &  3905 & 1375 \\ \hline
3  &  75 & 3750 & 25 & 1.90e-03 & \bf 1.03e-03 &  15.96 &  2.76 &   8.78 &  0.91 &  8384 & 1452 &  8504 &  854 \\ \hline
3  & 375 & 3750 &  5 & 2.29e-02 & \bf 4.82e-03 & 240.45 & 45.60 &  43.77 & 11.10 & 10506 & 1951 &  9075 & 2488 \\ \hline
4  &  75 & 3750 &  5 & 1.47e-02 & \bf 9.22e-04 &  75.64 & 20.89 &   4.16 &  1.06 &  5160 & 1438 &  4514 & 1081 \\ \hline
  \end{tabular}
  \caption{Timing comparison of explicit and implicit methods with different scenarios for $d$ (order), $n$ (size), $p$ (\# observations) and $r$ (rank), reporting average results for 10 optimization runs for each scenario.
    The moment tensor comes from the situation where the $p$ observations are vectors of length $n$ drawn from the uniform distribution.
    All times are reported in seconds.
    ``Time Per Iter.'' is the average time per inner iteration over all runs.
    ``Total Time'' is the overall optimization time, reporting mean and standard deviation for the 10 runs.
    ``Total Iterations'' is the total number of inner iterations (each of which requires a function/gradient calculation),
    reporting mean and standard deviation for the 10 runs.
  }
  \label{tab:timing_compare}
\end{table}

Our main focus is on how the cost per inner iteration (i.e., per function/gradient evaluation) grows under different scenarios. Recall that the cost per (inner) iteration is $O(rn^d)$ for the explicit method as compared to $O(pnr)$ for the implicit method. Hence, we expect the speedup of the implicit method to be $O(n^{d-1}/p)$. 
{There is little difference between the explicit and implicit methods when $n^d$ is relatively small.}
On current computing architectures, ``relatively small'' equates to $n^d < 10^6$.
For the scenarios where $d=3$ and $n=75$, the explicit 3rd order moment tensor is less than 4MB in size.
With one exception, the implicit method is slightly faster per iteration than the explicit method.
The one exception is $[d=3, n=75, p=3750, r=5]$, which is not surprising since $p \approx n^{d-1}$.
{There is a 5${\times}$ or greater improvement for the implicit method when $n^d$ is relatively large.}
  On current computing architectures, ``relatively large'' equates to $n^d > 10^7$.
  For the scenarios with $[d=3,n=375]$ and  $[d=4,n=75]$, the storage for the explicit moment tenors are 421~MB and 323~MB
  and the implicit method is faster by factors of 5 and 16, respectively.
{The per-iteration cost of the explicit method is insensitive to $p$.} Increasing $p$ from 750 to 3750 for $[d=3,n=75]$ causes no change in the per iteration time for the explicit method, but the implicit method becomes more expensive by a factor of 2.
Conversely, {the per-iteration cost of the implicit method increases only linearly with $n$ whereas the cost of the explicit increases at a polynomial rate.} When $n$ is increased from 75 to 375 (5${\times}$), the per iteration cost of the explicit method increase by a factor of 44 versus only 6 for the implicit method.
{The per-iteration cost of the explicit method goes up exponentially in $d$.}
  As we increase $d$ from $d=3$ to $d=4$ for $[n=75,p=3750]$,
  the per iteration cost of the implicit method is nearly unchanged whereas the explicit method goes up by a factor of 28.
  
\subsection{Numerical estimation of means in spherical Gaussian mixture model}
\label{sec:numer-estim-rank}

In the examples in the previous section, the test problems had no underlying low-rank structure.
Here we demonstrate the performance on problems with known solutions.
Specifically, we consider the case of data that comes from a spherical Gaussian mixture model as has been considered, e.g., in \cite{AnGeHsKa14a}.
Each observation is of the form
\begin{displaymath}
  \Vl \sim \mathcal{N}(\Vc{\mu}_{j_\ell},\sigma^2\mathbf{I})
  \qtext{where} \text{Prob}(j_\ell = j)= 1/r \text{ for } j = 1,\dots,r.
\end{displaymath}
In other words, there are $r$ spherical Gaussians mixed together (with equal probabilities) with means $\Aji$ for $j=1,\dots,r$.
Each observation $\Vl$ is drawn from \emph{one} of the $r$ Gaussians.
If we let $V$ be the random variable corresponding to observations $\Vl$, then
\begin{displaymath}
  \mathbb{E}( V^{\otimes d} ) \approx \frac{1}{r} \sum_{j=1}^r \Vc{\mu}_j^{\otimes d} .
\end{displaymath}
The expectation is not exact because there are lower-order terms depending on $\sigma^2$;
see, e.g. Hsu and Kakade \cite{HsKa13} for the case of $d=3$.
We can potentially incorporate the lower-order terms into the implicit framework,
and we leave this as a topic for future work since it involves more extensive derivations
and is not necessary for successful recovery of the $\Vc{\mu}_j$'s.
Ignoring the lower-order terms, the $d$th-order moment has rank $r$
and can be approximated with a rank-$r$ symmetric tensor factorization such that  $\A_{\rm ideal} =
\begin{bmatrix}
  \Aji{1} & \cdots & \Aji{r}
\end{bmatrix}$
is its factor matrix
(appropriately scaled).
Unlike \cite{AnGeHsKa14a,AnGeHsKa14}, we make no assumption that $\A_{\rm ideal}$ is orthogonal and do not need to apply whitening.
We choose an example where the means $\Aji$ are correlated to show that it works even in the non-orthogonal and non-whitened case, which is generally considered to be both more difficult and more realistic.
Specifically, we require $\|\Aji\|=1$ for all $j$ and $\Aji{j_1}^T\Aji{j_2} = 0.5$ for all $j_1 \neq j_2$, which corresponds to an angle of $60^\circ$.\footnote{The function \texttt{matrandcong} from the Tensor Toolbox from MATLAB can be used to create such a matrix, which follows the method proposed by Tomasi and Bro \cite{ToBr06}.}
An example of a Gaussian mixture from $r=5$ components is shown in \cref{fig:gmm-picture}. The data is randomly projected from $n=500$ dimensions down to two. We can see that the data is highly overlapping, making discovery of the means potentially challenging.

\begin{figure}
\definecolor{mycolor1}{rgb}{0.0000,0.4470,0.7410}
\definecolor{mycolor2}{rgb}{0.8500,0.3250,0.0980}
\definecolor{mycolor3}{rgb}{0.9290,0.6940,0.1250}
\definecolor{mycolor4}{rgb}{0.4940,0.1840,0.5560}
\definecolor{mycolor5}{rgb}{0.4660,0.6740,0.1880}
\definecolor{mycolor6}{rgb}{0.3010,0.7450,0.9330}
\definecolor{mycolor7}{rgb}{0.6350,0.0780,0.1840}  
  \pgfplotsset{
  tick label style={font=\tiny},
  label style={font=\footnotesize},
  legend style={font=\small},
  title style={font=\small, at={(0.5,1.0)}},
  scale only axis,
  height = 3in,
  width = 4.5in,
  scatter,scatter src=explicit symbolic, 
  scatter/classes={
    c1={mycolor1},
    c2={mycolor2},
    c3={mycolor3},
    c4={mycolor4},
    c5={mycolor5},
    m={black}},  
  }
  \centering
  \begin{tikzpicture}
    \begin{axis}[xmin=-6, xmax=5, ymin=-6, ymax=4]
      \addplot[only marks, mark size=0.4pt] table[meta=meta] {coords.dat};
      \addplot[only marks, mark=x, mark size=5pt, line width=3pt] table[meta=meta] {truemeans.dat};
      \legend{data ($\Vl$) colored by component,,,,,,mean ($\Vc{\mu}_j$) colored by component,,,,}
     \end{axis}
  \end{tikzpicture}
  \caption{Example of data from Gaussian mixture model for $n=500$ with $r=5$ components of the form $\mathcal{N}(\Aji,\sigma^2\Mx{I})$.
    The component means are such $\|\Aji\|=1$ for all $j$ and  $\Aji{j_1}^T\Aji{j_2} = 0.5$ for all $j_1 \neq j_2$, and the covariance is $\sigma^2 \Mx{I}$ with
    $\sigma = 10^{-1}$. We show $p=750$ samples, distributed uniformly across the five components. The samples are color-coded according to component. The data has been randomly projected to two dimensions for the purposes of visualization.}
  \label{fig:gmm-picture}
\end{figure}

Results of a series of experiments with Gaussian mixture models are shown in \cref{fig:gmm3,fig:gmm4} for $d=3$ and $d=4$, respectively.
For all experiments, we fix $n=500$.
We only use the implicit method since the explicit method would be too expensive in time and memory.
For $d=3$, the explicit moment tensor would be 1~GB in size; for $d=4$, it would be 500~GB.
We experiment with different numbers of components ($r \in \set{3,5,10}$)
and different noise levels ($\sigma \in \set{10^{-4},10^{-3},10^{-2},10^{-1}}$).
For each scenario (i.e., $d$,$r$,$\sigma$), we generate $p=250r$ observations, i.e., 250 samples per component.
One generally does not know the true rank, so 
we test different values for the  rank of the approximation in the range $\hat r \in \set{r-2,\dots,r+2}$.
We set the convergence tolerance to be \texttt{pgtol} = 1e-4.
For each value of $\hat r$, we run the optimization procedure ten times and take the best solution, i.e., with the lowest final function value ($f$).

Before we discuss our detailed results, we first discuss 
the choice of random initialization for the optimization method, which 
is critical to the success of the method.
If $r \ll n$, there is a high probability that a random initial guess such as $\A_0 \sim \mathcal{N}(0,1)^{n \times \hat r}$ (with columns normalized)
is nearly orthogonal to the means and the resulting observations,
i.e., $\A_0^T\A_{\rm ideal} \approx 0$ and $\A_0^T\V \approx 0$.
To avoid this dilemma, we propose initialization based on the randomized range finder (RRF) \cite{HaMaTr11}:
\begin{equation}
  \label{eq:rrf}
  \Mx{A}_0 = \V \Mx{\Omega}
  \qtext{(with columns normalized) where}
  \Mx{\Omega}\sim\mathcal{N}(0,1)^{p \times \hat r}.
\end{equation}
The matrix $\Mx{\Omega}$ is called a Gaussian testing matrix.
We illustrate the advantage of the RRF initialization in \cref{fig:init-compare} for
an exemplar scenario: $d=3$, $r=3$, $\hat r=3$, and $\sigma \in \set{10^{-4},10^{-3},10^{-2},10^{-1}}$.
The results for other values of $d$, $r$ and $\hat r$ are similar.
For each type of initialization, 
we do 100 optimization runs, each time with a different starting guess.
We select the best function value over \emph{all} runs (100 per initialization method, for a total of 300 runs) for each value of $\sigma$, and then report how often that initialization technique converged to a value within 0.01 of the best value.
We also report the total time for the 100 optimization runs at the top of each bar.
As we see in  \cref{fig:init-compare}, the RRF initialization is much more successful than the random initialization in terms of the number of successful optimization runs. 
As  mentioned above,
if $\A_0$ is close to orthogonal to $\A_{\rm ideal}$ (which is expected if it is random),
then it is also close to orthogonal to the random observations in $\V$ for smaller values of $\sigma$.
As a result, the values in  $[\V^T\A]^{d-1}$ are very small,
leading to a very small gradient
and premature or at least slow convergence of the optimization method.
The issue is less pronounced for larger values of $\sigma$ since the additional noise lowers the probability of being nearly orthogonal.
To understand the effect of premature convergence, we tighten the convergence tolerance (\texttt{pgtol=1e-6}) in the experiment to show that the success rate is higher in those cases. However, the optimization now takes about 2--3 times longer than with the RRF initialization.

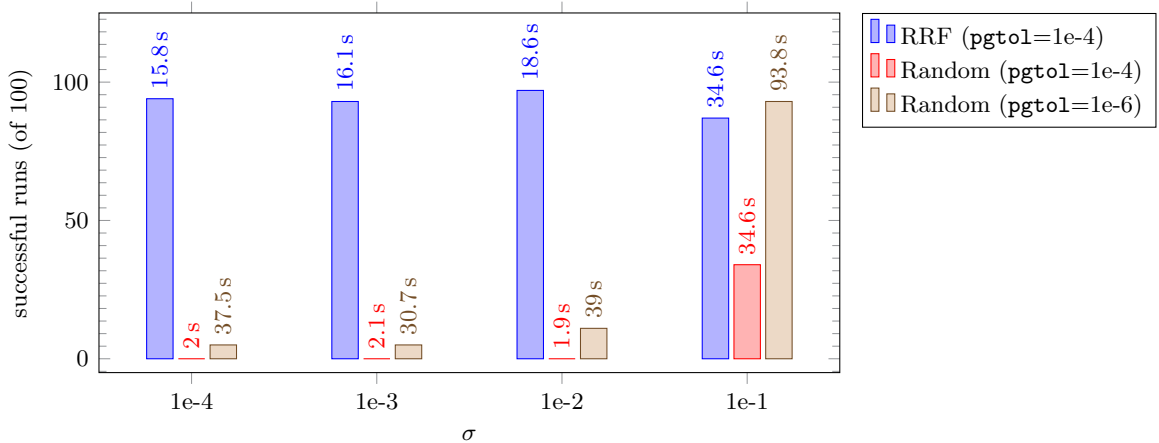
\begin{figure}
  \pgfplotsset{
  }%
  \centering
  \pgfplotstableread{init.dat}\inittable
  \begin{tikzpicture}
    \begin{axis}[
      width=4.5 in, height=2.5in,
      ybar, bar width=10pt,
      ymin=-5, ymax=125,
      ytick distance=50,
      minor y tick num=10,
      ylabel=successful runs (of 100),
      xmin=0.5,xmax=4.5, xtick={1,2,3,4},
      xticklabels={1e-4, 1e-3, 1e-2, 1e-1}, xlabel=$\sigma$,
      tick label style={font=\footnotesize},
      legend style={cells={anchor=west}, legend pos=outer north east,
        font=\footnotesize},
      point meta=explicit,
      nodes near coords={$\pgfmathprintnumber{\pgfplotspointmeta} \, \si{\second}$},,
      every node near coord/.append style={
        anchor= west, rotate=90, font=\footnotesize},
      label style={font=\footnotesize},
      ]
      \addplot table[x=Index,y=RRF4_cnts,meta=RRF4_times] {\inittable};
      \addplot table[x=Index,y=G4_cnts,meta=G4_times] {\inittable};
      \addplot table[x=Index,y=G6_cnts,meta=G6_times] {\inittable};
      \legend{RRF (\texttt{pgtol}=1e-4), Random (\texttt{pgtol}=1e-4), Random (\texttt{pgtol}=1e-6), Random (\texttt{pgtol}=1e-8), Random (\texttt{pgtol}=1e-10)}
    \end{axis}
  \end{tikzpicture}
  \caption{Successful optimization runs (out of 100) for different choices for the initial guess $\A_0$. We compute a rank $\hat r=3$ approximation for moment tensors of order $d=3$. Data are generated by a spherical Gaussian mixture model with $r=3$ components, $p=750$ samples proportionally divided among them. The observations are of size $n=500$ and the noise is defined by $\sigma$.
    We compare the RRF, i.e., $\A_0 = \V\Mx{\Omega}$ for $\Mx{\Omega} \sim \mathcal{N}(0,1)^{p \times \hat r}$, and random, i.e., $\A_0 \sim \mathcal{N}(0,1)^{n \times \hat r}$, with  different convergence tolerances (\texttt{pgtol)}.  A run is considered successful if the final solution is within 0.01 of the best over all runs using any method for that value of $\sigma$, and the vast majority of RRF r\textsc{section 5}uns are successful in every case. Each bar is capped with the total time for the 100 runs (in seconds), showing that tightening the convergence threshold can improve the success rate at a cost of a longer run time.}
  \label{fig:init-compare}
\end{figure}

\pgfplotstableread{gmm.dat}\gmmtable
\begin{figure}%
\pgfplotsset{
  tick label style={font=\footnotesize},
  label style={font=\footnotesize},
  legend style={font=\small},
  title style={font=\small, at={(0.5,1.0)}},
  scale only axis,
  width = 1.3in,
  xtick distance = 1,
}%
\centering%
\begin{tabular}{rrr}
  {\RelErrPlot{3}{3}} &
  {\RelErrPlot{3}{5}} &
  {\RelErrPlot{3}{10}}
  \\
  {\ScorePlot{3}{3}} & 
  {\ScorePlot{3}{5}} &
  {\ScorePlot{3}{10}}
  \\
  {\TimePlot{3}{3}{15}} & 
  {\TimePlot{3}{5}{80}} &
  {\TimePlot{3}{10}{525}}  
\end{tabular}~~~ \\[2mm]
\pgfplotslegendfromname{relegend33}
  \caption{
    We compute a rank-$\hat r$ approximation for moment tensors of order $\bm{d=3}$.
    Data are generated by a spherical Gaussian mixture model with $r$ components and $p$ samples proportionally divided among them.
    The observations are of size $n=500$ and the noise is defined by $\sigma$.
       The top plot is the smallest relative error over ten optimization runs.
    The middle plot is the corresponding similarity score of the low-rank model factors and the true means.
    The bottom plot is the total time for the ten optimization runs.
    The true rank ($r$) is shown by a vertical dashed line.}
  \label{fig:gmm3}
\end{figure}
\begin{figure}%
\pgfplotsset{
  tick label style={font=\footnotesize},
  label style={font=\footnotesize},
  legend style={font=\small},
  title style={font=\small, at={(0.5,1.0)}},
  scale only axis,
  width = 1.3in,
  xtick distance = 1,
}%
\centering%
\begin{tabular}{rrr}
  {\RelErrPlot{4}{3}} &
  {\RelErrPlot{4}{5}} &
  {\RelErrPlot{4}{10}}
  \\
  {\ScorePlot{4}{3}} & 
  {\ScorePlot{4}{5}} &
  {\ScorePlot{4}{10}}
  \\
  {\TimePlot{4}{3}{15}} & 
  {\TimePlot{4}{5}{80}} &
  {\TimePlot{4}{10}{525}}  
\end{tabular}~~~ \\[2mm]
\begin{tikzpicture}
  \pgfplotslegendfromname{relegend33}
\end{tikzpicture}
  \caption{
    We compute a rank-$\hat r$ approximation for moment tensors of order $\bm{d=4}$.
    Data are generated by a spherical Gaussian mixture model with $r$ components and $p$ samples proportionally divided among them.
    The observations are of size $n=500$ and the noise is defined by $\sigma$.
       The top plot is the smallest relative error over ten optimization runs.
    The middle plot is the corresponding similarity score of the low-rank model factors and the true means.
    The bottom plot is the total time for the ten optimization runs.
    The true rank ($r$) is shown by a vertical dashed line.}
  \label{fig:gmm4}
\end{figure}

The results henceforth use the RRF initialization.
In \cref{fig:gmm3,fig:gmm4}, plots in the top row show the final relative error for the best of ten runs, each with a different random initialization.
(Our experience suggested that ten random initializations was sufficient to find a good solution because repeated runs converged to within 0.01 of the best solution an average of 8 times out of 10.)
Note that the error is calculated based only on observed data using the methodology in \cref{sec:comp-funct-value}.
As the rank increases, the relative error will eventually stagnate and the rank might be estimated as the first rank before stagnation.
For $\sigma \leq 10^{-2}$,
the relative error drops significantly at $\hat r = r$ and remains fairly constant for $\hat r > r$. 
For $\sigma=10^{-1}$, the difference in errors across the different ranks is barely noticeable, which makes determining the rank difficult.
Nevertheless, the method is still successful at recovering the true means, even if the rank is overestimated, as will be seen in the middle plots.
In terms of the final error, there is little difference between $d=3$ and $d=4$.

The middle plots show the recovery of the true means, using the similarity score for the solution with the lowest final function value.
Without loss of generality, we assume $\|\Aji\| = 1$ for $j=1,\dots,r$ and $\|\Aj\|=1$ for $j=1,\dots,\hat r$.
The score is the average cosine of the angles between the computed means in $\A$ and the true means in $\A_{\rm true}$:
\begin{equation}\label{eq:Score}
  \text{score} =
  \begin{cases}
    \displaystyle\max_{\pi \in \Pi(\hat r,r)} \frac{1}{\hat r} \sum_{j=1}^{\hat r}  |\Aji{\pi(j)}^T\Aj|
    & \text{if } \hat r < r, \text{ and}\\
    \displaystyle\max_{\pi \in \Pi(r,\hat r)} \frac{1}{r} \sum_{j=1}^r |\Aji^T\Aj{\pi(j)}|
    & \text{if } \hat r \geq r,\\
  \end{cases}
\end{equation}
where
$\Pi(r_1, r_2) = \Set{ \pi: \set{1,\dots,r_1} \rightarrow \set{1,\dots,r_2} | \pi(j_1) \neq \pi(j_2) \forall j_1 \neq j_2 }$
is the set of all possible one-to-one mappings for $r_1 \leq r_2$.
Ideally, the score should be 1.
If $\hat r < r$, then $\A$ can only match a subset of the true means. Nonetheless, we can still check if a subset of the means is correctly identified.
Likewise, if $r > \hat r$, there are a surplus of candidates to match the means, so we pick the best ones.
Here, we see the advantage of higher-order moments because the results for $d=4$ are superior to those for $d=3$.
Improvements with increasing dimensionality have been observed in similar situations \cite{AnBeGoRa14}.

The bottom plots show the sum of the run times for all ten optimization
runs. Note that the y-axis scales are different for each value of $r$
but held constant across the two plots (for $d=3,4$) for easy
comparison. Observe that the time for the higher order $d=4$ is actually less than that of the lower order $d=3$.

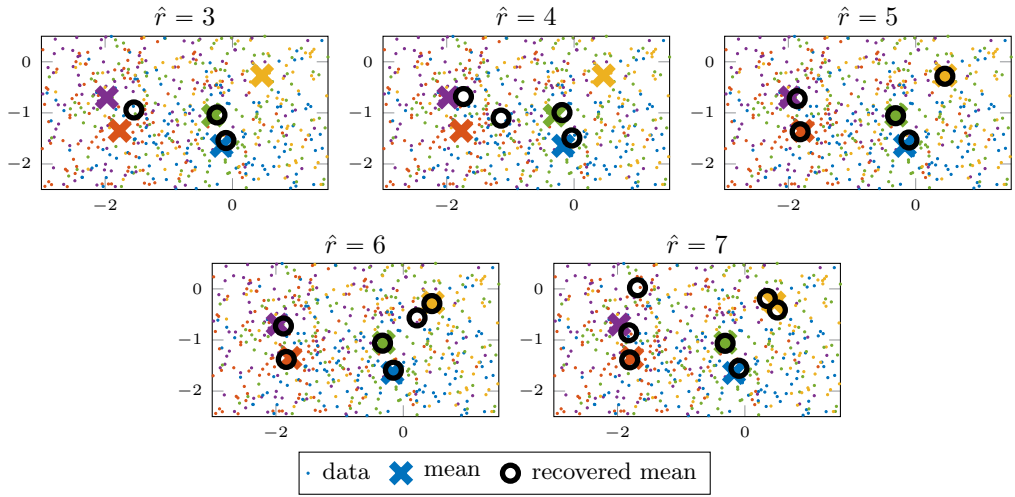
\begin{figure}
\definecolor{mycolor1}{rgb}{0.0000,0.4470,0.7410}
\definecolor{mycolor2}{rgb}{0.8500,0.3250,0.0980}
\definecolor{mycolor3}{rgb}{0.9290,0.6940,0.1250}
\definecolor{mycolor4}{rgb}{0.4940,0.1840,0.5560}
\definecolor{mycolor5}{rgb}{0.4660,0.6740,0.1880}
\definecolor{mycolor6}{rgb}{0.3010,0.7450,0.9330}
\definecolor{mycolor7}{rgb}{0.6350,0.0780,0.1840}  
  \pgfplotsset{
  tick label style={font=\tiny},
  label style={font=\footnotesize},
  legend style={font=\footnotesize},
  title style={font=\small, at={(0.5,0.9)}},
  scale only axis,
  height = .8in,
  width = 1.5in,
  scatter,scatter src=explicit symbolic, 
  scatter/classes={
    c1={mycolor1},
    c2={mycolor2},
    c3={mycolor3},
    c4={mycolor4},
    c5={mycolor5},
    m={black,mark=o, mark size = 3pt, line width=2pt}},  
  legend cell align={left}
}
  \centering
  \RecoveryPlot{3} 
  \RecoveryPlot{4} 
  \RecoveryPlot{5}\\    
  \RecoveryPlot{6} 
  \RecoveryPlot{7}\\ 
  \begin{tikzpicture}
  \pgfplotslegendfromname{mlegend5}    
  \end{tikzpicture}
  \caption{For the problem described in \cref{fig:gmm-picture}, recovery of the means from the third-order ($d=3$) moment tensor. The true rank is $r=5$. The rank of the approximation is in the range $\hat r \in \set{3,\dots,7}$. The solution shown corresponds to the lowest relative error over ten runs. The recovered means (columns $\Aj$ for $j=1,\dots,\hat r$) are shown as black circles and should, ideally, align with the true means ($\Aji$ for $j=1,\dots,r$) given by colored x's. The random projection is the same as was used in \cref{fig:gmm-picture}, but the image is zoomed in to clearly see the means.}
  \label{fig:gmm-solns}
\end{figure}

Finally, \cref{fig:gmm-solns} shows recovered means for a scenario illustrated in \cref{fig:gmm-picture}: $n=500$, $r=5$, $\sigma=10^{-1}$, $p=1,250$. Here, we consider the third-order moment tensor, $d=3$.
These are the results of the computations shown in the middle column of \cref{fig:gmm3}. For $\hat r < r$, it is of course impossible to recover all the means. In both cases, it is close to detecting the green and blue means.  For $\hat r = r$, the recovery is very good, despite the high overlap in the clusters. Most interestingly, overestimating the rank does not detract substantially from detecting the true means. 

\section{Extending to  Massive or Online Observations}
\label{sec:extend-probl-with}

In the case that we have many, many observations so that $p$ is
exceptionally large, even the implicit method may be too expensive.
Alternatively, we may have a situation where the data is online and needs
to be processed as it arrives.
In either case, an option is to use only a subset of observations
for each function or gradient evaluation.
For instance, we can show the following lemma, whose proof is left to the reader.
\begin{lemma}\label{lem:stoc}
  Let $\X = \frac{1}{p} \psum \Vlod \in \Kdnp$ where $\V \in \Real^{n \times p}$.
  Let $s \ll p$ and $\Vs \in \Real^{n \times s}$ comprise $s$ random columns (with replacement) from $\V$.
  Define the random stochastic Kruskal tensor $\Xs = \frac{1}{s} \sum_{\ell = 1}^s \Vc[\tilde]{v}_{\ell}^{\otimes d} \in \Kdns$.
  Then for a given vector $\Vc{a} \in \Real^n$, we have
  \begin{displaymath}
    \mathbb{E} [ \Xs\Vc{a}^{d-1} ] = \X\Vc{a}^{d-1}.
  \end{displaymath}
\end{lemma}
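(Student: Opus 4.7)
The plan is to exploit linearity of both the TTSV operation (which is linear in its tensor argument) and the expectation, together with the fact that each sampled column is marginally uniform on the $p$ columns of $\V$. Under these two linearities, the expected stochastic TTSV collapses to a uniform $1/p$-weighted average of rank-1 TTSVs over \emph{all} columns of $\V$, which is precisely $\X\Vc{a}^{d-1}$ by the definition of $\X$. The whole argument is the standard Monte Carlo unbiasedness identity adapted to the TTSV setting.

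Concretely, the steps I would carry out, in order, are the following. First, expand the stochastic TTSV as a scaled sum of rank-1 TTSVs,
\[
  \Xs\Vc{a}^{d-1} = \frac{1}{s}\sum_{\ell=1}^s \Vc[\tilde]{v}_{\ell}^{\otimes d}\Vc{a}^{d-1},
\]
using linearity of TTSV in its tensor argument. Second, take expectations and pass them inside the (finite, deterministic) sum, obtaining $\mathbb{E}[\Xs\Vc{a}^{d-1}] = \frac{1}{s}\sum_{\ell=1}^s \mathbb{E}[\Vc[\tilde]{v}_{\ell}^{\otimes d}\Vc{a}^{d-1}]$. Third, for each term use the fact that $\Vc[\tilde]{v}_{\ell}$ is uniform on $\{\Vl{1},\dots,\Vl{p}\}$ to write
\[
  \mathbb{E}\bigl[\Vc[\tilde]{v}_{\ell}^{\otimes d}\Vc{a}^{d-1}\bigr]
  = \frac{1}{p}\sum_{\ell'=1}^p \Vl{\ell'}^{\otimes d}\Vc{a}^{d-1}.
\]
Since this is independent of $\ell$, the outer $\frac{1}{s}\sum_{\ell=1}^s$ simply averages $s$ identical copies, and the $s$'s cancel. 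Finally, identify the result,
\[
  \mathbb{E}[\Xs\Vc{a}^{d-1}] = \frac{1}{p}\sum_{\ell'=1}^p \Vl{\ell'}^{\otimes d}\Vc{a}^{d-1}
  = \biggl(\frac{1}{p}\sum_{\ell'=1}^p \Vl{\ell'}^{\otimes d}\biggr)\Vc{a}^{d-1}
  = \X\Vc{a}^{d-1},
\]
using linearity of TTSV once more and the definition $\X = \frac{1}{p}\psum \Vlod$.

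Honestly, there is no real obstacle here; the lemma is a straightforward Monte Carlo unbiasedness statement. The only thing to be careful about is bookkeeping the two prefactors, namely the $1/p$ baked into $\X$ and the $1/s$ baked into $\Xs$, which conspire exactly so that an i.i.d.\ uniform sample of $s$ columns reproduces the full $p$-column average in expectation. Note also that independence among the $\Vc[\tilde]{v}_{\ell}$'s is not needed for this single-vector expectation, only the marginal uniformity guaranteed by sampling with replacement. If desired, \cref{lem:rank_one_ttsv} can be invoked to rewrite each $\Vl{\ell'}^{\otimes d}\Vc{a}^{d-1}$ as $\dotprod{\Vc{a}}{\Vl{\ell'}}^{d-1}\Vl{\ell'}$, which matches the implicit-TTSV form used in \cref{lem:ttsv}, but this cosmetic rewrite is not required for the proof.
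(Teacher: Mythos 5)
Your proof is correct and is exactly the argument the paper intends — the authors explicitly leave this proof to the reader, and the standard route is the one you take: linearity of TTSV in the tensor argument, linearity of expectation, and marginal uniformity of each sampled column so that the $1/s$ and $1/p$ prefactors cancel appropriately. Your observation that independence of the sampled columns is not needed (only marginal uniformity) is a correct and slightly sharper remark than the statement requires.
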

This enables us to compute stochastic functions and gradients.
If we define $\Mx[\tilde]{Y}$ such that its $j$th column is given by
$\Vc[\tilde]{y}_j=\Xs\Vc{a}^{d-1}$, then \cref{lem:stoc}
says $\mathbb{E}[\Mx[\tilde]{Y}] = \Y$. By linearity of expectation,
we can use this for unbiased stochastic estimators of the function and
gradients. Moreover, we can use \cref{alg:implicit} directly, only passing the stochastic $\Vs$ and weight vector $\Vc[\tilde]{\nu} =
\begin{bmatrix}
  1/s & 1/s & \cdots & 1/s
\end{bmatrix}^\intercal$ rather than the original values.

We provide a numerical example to demonstrate the potential of a stochastic
approach.
We revisit the Gaussian mixture model problem in \cref{sec:numer-estim-rank} and
consider an example with the following parameters: $n=500$, $\sigma=0.1$, $r=\hat r = 10$, and $d=3$. 
The major difference in the setup is that we assume we have $p=500,000$ samples. 

We compare the standard optimization used in \cref{sec:Results} to our own implementation of Adam \cite{KiBa15},
a variant of stochastic gradient descent. We employ a few standard modifications as follows.
We add the ability to compute an \emph{estimate} of the function value after each epoch.
We base our function value estimate on 1,000 randomly sampled observations --- using the same set for every experiment so that the function estimates are comparable.
We set the initial learning rate to $0.01$ and the epoch size to 100.
If the (estimated) function value does not decrease after an epoch,
the learning rate reduces to $0.001$
and the method resets to the iterate from the prior epoch before continuing.
If it happens again, the algorithm terminates as converged, reverting to the iterate from the prior epoch as the final solution.

\begin{table}
  \centering\footnotesize
  \begin{tabular}{{|r|S|S|S|}}
    \hline
\multicolumn{1}{|c|}{Method} & {Best $f$ (shifted)} &  {Sim. Score} & {Total Time (s)}\\ \hline 
    standard & -0.2471 & 0.9998 & 2166.70 \\ \hline 
  Adam, s=10 & -0.2209 & 0.9225 & 8.03 \\ \hline 
 Adam, s=100 & -0.2427 & 0.9929 & 10.48 \\ \hline 
Adam, s=1000 & -0.2464 & 0.9990 & 41.00 \\ \hline 
  \end{tabular}
  \caption{Comparison of standard and stochastic optimization (Adam) and varying numbers of samples ($s$)
    based on a rank $\hat r=10$ approximation for a moment tensor of order $d=3$.
    Data are generated by a spherical Gaussian mixture model with $r=10$ components and
    $p=500,000$ observations, evenly divided among the Gaussians.
    The observations are of size $n=500$, and the noise is $\sigma=0.1$.
    Each method is run ten times, reporting the best function value,
    corresponding similarity score, and total time for all ten runs. }
  \label{tab:stoch}
\end{table}

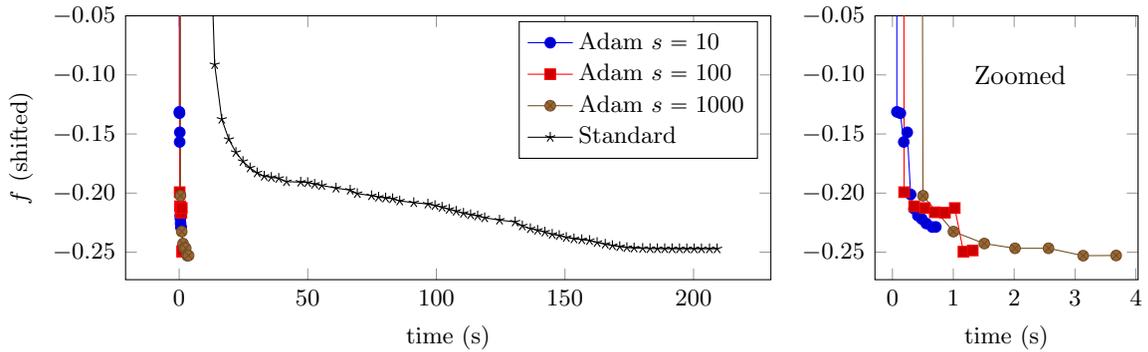
\begin{figure}
  \centering
  \pgfplotsset{
    height=2in,
    ymax = -0.05,
    y tick label style={
      /pgf/number format/.cd,
      fixed,
      fixed zerofill,
      precision=2,
      /tikz/.cd
    },      
    xlabel = time (s),
    tick label style={font=\footnotesize},
    label style={font=\footnotesize},
  }
  \begin{tikzpicture}
    \begin{axis}[
      width=4 in, 
      ylabel = {$f$ (shifted)},
      legend style={cells={anchor=west}, font=\footnotesize},
      ]
      \addplot table[x=time,y=fval] {stoch-trace-2.dat};
      \addplot table[x=time,y=fval] {stoch-trace-3.dat};
      \addplot table[x=time,y=fval] {stoch-trace-4.dat};
      \addplot table[x=time,y=fval] {stoch-trace-1.dat};
      \legend{Adam $s=10$, Adam $s=100$, Adam $s=1000$, Standard}
    \end{axis}
  \end{tikzpicture}
  \hfil
  \begin{tikzpicture}
    \begin{axis}[
      width=2 in, 
      ]
      \addplot table[x=time,y=fval] {stoch-trace-2.dat};
      \addplot table[x=time,y=fval] {stoch-trace-3.dat};
      \addplot table[x=time,y=fval] {stoch-trace-4.dat};
      \node[] at (axis cs:2.1,-0.1) {\small Zoomed};
    \end{axis}
  \end{tikzpicture}
  \caption{Comparison of the runs corresponding to the best solutions reported in \cref{tab:stoch}.
    For Adam, each marker indicates one epoch (100 iterations), and 
    the function values are estimates based on 1,000 samples.
    For the standard method, each marker indicates one iteration and the function values are exact.
    A zoomed-in plot is shown on the right to see the differences in the stochastic methods.
  }
  \label{fig:stoch-traces}
\end{figure}

In our experiments, we calculate only a \emph{shifted} version of $f$ from \cref{eq:optFunc}
to avoid calculating $\|\X\|^2$ (a constant term) since its complexity depends on $p^2$.
We run each method 10 times (using the same 10 initial guesses for each method), and we save the method that
achieves the lowest final (shifted) function value.
\Cref{tab:stoch} reports the function values and corresponding similarity scores as compared to the true solution.
The stochastic methods find solutions that are as good as the standard method at a tiny fraction of
the computational cost.
The progress at each iteration is shown in 
\cref{fig:stoch-traces}.

\section{Conclusions and future work}\label{sec:Conclusion}

The $d$th-order moment can be prohibitively expensive to form and decompose.
We developed a method to implicitly find a low-rank decomposition to the symmetric moment tensor that does not require forming the whole tensor. Our numerical results show that this new implicit method is returning the same decompositions as existing explicit methods, is considerably faster, and can solve problems that would have previously been intractable.

We have demonstrated the success of this method in the case of a spherical Gaussian mixture model for $n=500$.
In the case of massive or online data, we have provided preliminary results showing that stochastic calculation of the
gradient is also promising.
In contrast to \cite{AnGeHsKa14a}, we do not require whitening and give numerical evidence of the effectiveness of the method.
However, we note that the tensor approach as we have demonstrated can only find the direction of the means, not their norms.
We can incorporate information from the second-order moments to eliminate the ambiguity, e.g., as in \cite{HsKa13} for $d=3$.
Incorporating this approach is a topic for future research.

\section*{Acknowledgments}
We thank the anonymous referees for their enlightening comments and suggestions which have greatly improved the manuscript.
We also thank Daniel Hsu, Joe Kileel, and Jo{\~a}o M.~Pereira for helpful conversations.

\clearpage
\bibliographystyle{siamplain}


\end{document}